\newcommand{\holo}{{\mathcal O}}
\newcommand{\R}{{\mathbb R}}
\newcommand{\C}{{\mathbb C}}
\newcommand{\N}{{\mathbb N}}
\newcommand{\newsection}[1]
{\subsection{#1}\setcounter{theorem}{0} \setcounter{equation}{0}
\par\noindent}
\newtheorem{theorem}{Theorem}
\newtheorem{lemma}[theorem]{Lemma}
\newtheorem{corr}[theorem]{Corollary}
\newtheorem{proposition}[theorem]{Proposition}
\newtheorem{remark}[theorem]{Remark}
\newcommand{\eprop}{\end{proposition}}
\newcommand{\Sum}{\displaystyle\sum}
\newcommand{\haf}{\frac{1}{2}}
\begin{document}
\thispagestyle{empty}
\begin{center}{\bf\large Estimates on the Probability of Outliers for Real Random Bargmann-Fock functions.}\\ Scott Zrebiec \end{center}

\bigskip\bigskip
{\bf Abstract} \par In this paper we consider the distribution of
the zeros of a real random Bargmann-Fock function of one or more
variables. For these random functions we prove estimates for two
types of families of events, both of which are large deviations
from the mean. First, we prove that the probability there are no
zeros in $[-r,r]^m\subset\R^m$ decays at least exponentially in
terms of $r^m$. For this event we also prove a lower bound on the
order of decay, which we do not expect to be sharp. Secondly, we
compute the order of decay for the probability of families of
events where the volume of the complex zero set is either much
larger or much smaller then expected.
\newsection{Introduction}\par
Random functions provide techniques to study typical properties of
elements of a Hilbert space, and have been used to shed light on
the zero set of elements of a Hilbert Space of functions. We will
define real random Bargmann-Fock functions as a linear combination
of basis functions, where each basis function is weighted with a
real standard Gaussian random variable. Alternatively, each random
function is a representative of a Gaussian field on the
Bargmann-Fock space.\par There is a significant body of knowledge
concerning the zero set of Gaussian random functions. In
particular, in the work of Kac and Rice (\cite{Kac43},
\cite{Rice45}) techniques where developed to compute the
correlation functions for the zeros of Gaussian random functions.
This work has more recently been used to compute the two point
correlation function for the zeros of model systems of Gaussian
Random variables, (\cite{BleherDi},
\cite{BleherShiffmanZelditchUniv}, \cite{Prosen96}). Additionally,
there is a significant body of results concerning the expected
number (or volume) of zeros, the variance of the number of zeros
and other results such as a central limit law
(\cite{ShiffmanZelditchVarOfZeros},
\cite{SodinTsirelsonVariance})].
\par To complement these results concerning events within the central limit region,
recent work has been conducted on families of events which are
large deviations from the mean, such as the event where there are
no zeros in an interval (or ball) of large radius. This event is
called a $\mathrm{Hole}$. As we will be working with random
functions on $\C^m$, whose restriction to $\R^m$ is real valued,
we will need to distinguish between real holes and complex ones.

\par{\bf Definition.} We define $\mathrm{Hole}_{r,\R^m} $ to be the event
consisting of all real random Bargmann-Fock functions which have
no zeros in the interval $[-r,r]^m\subset \R^m$, for a large
$r$.\par

\par{\bf Definition.} We define $\mathrm{Hole}_{r,\C^m} $ to be the event
consisting of all real random Bargmann-Fock functions which have
no zeros in the interval $B(0,r)\subset \C^m$, for a large
$r$.\par

A simple family of events similar to those we will study here is
the family of events where a coin is flipped $N$ times but no
heads show up, which has probability $=e^{-N\log(2)}$. To
facilitate the comparison to this toy system we let $N_r$ be the
number of expected zeros (or the expected volume of the zero set)
in a ball of radius $ r$. A series of results have been shown for
the hole probability of complex random functions on various
spaces. As in the coin flip model, a general estimate for the
order of the decay of the upper bound for any Riemann surface was
derived and proven: $\mathbf{ Prob}( \mathrm{ Hole_{r,\C^1}} )\leq
e^{-c N_r}$, \cite{Sodin00}. For one variable complex Gaussian
random functions related to the Hardy space on the disk, the order
of the previous estimate was subsequently shown to be sharp:
$\mathbf{ Prob}( \mathrm{ Hole_{r,\C^1}}) = e^{-\frac{\pi N_r+
o(N_r)}{6}}$, \cite{PeresVirag04}. Whereas, for one complex
Gaussian random function related to the Bargmann-Fock space in $m$
variables the estimate is not sharp
(\cite{SodinTsirelson05},\cite{Zrebiec07MMJ}):
$$e^{-c_2 N_r^{1+\frac{1}{m}}}\leq \mathbf{ Prob}(\mathrm{ Hole_{N_r,\C^1}})\leq  e^{-c_1 N_r^{1+\frac{1}{m}}}.$$
The techniques used in the previous work have also been used to
solve results for random SU($m+1$) polynomials \cite{Zrebiec07},
as well as random holomorphic sections of the N$^{th}$ tensor
power of a positive line bundle on a compact Kahler Manifold,
\cite{ShiffmanZelditchZrebiec}.\par

The study of $\mathrm{Hole_{r,\R^m}}$ poses distinctly different
challenges and opportunities, and significant strides have been
made on one model system: real Gaussian random functions
associated to the $L^1$ Hardy Space. In particular, the
probability a degree $N$ Kac polynomial has no real zeros has been
shown to be $\holo(N^{-b})$, \cite{DemboPoonenShaoZeitouni02}.
Since this work a comparison inequality for Gaussian processes has
been discovered, which is very useful for proving an upper bound
for the hole probability \cite{LiShao02}. This comparison
inequality will prove to be useful in subsequent sections.\par

In this work we will contribute to this area by deriving estimates
for the real and complex hole probability for (real) Gaussian
random functions associated with the Bargmann-Fock Space.

\begin{theorem}\label{RealHPDecay} (The decay of the real hole probability)\par
If
$$\psi_\alpha (z_1, z_2 \ldots, z_m)= \Sum_j \alpha_j
\frac{z_1^{j_1} z_2^{j_2} \ldots z_m^{j_m}}{\sqrt{j_1! \cdot
j_m!}},$$ where $\alpha_j$ are independent identically distributed
real Gaussian random variables, then there exists $R_m, \ c_m, \
C_m$ such that for all $r>R$
$$ e^{-C_mr^{2m}}< \mathbf{Prob}(\mathrm{Hole_{r,\R^m}})< e^{-c_mr^m}$$
\end{theorem}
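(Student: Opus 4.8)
The plan is to treat the two inequalities by entirely different mechanisms: a sign-change and Gaussian-comparison argument for the upper bound, and a Gaussian shift (Cameron--Martin) argument for the lower bound.

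For the upper bound $\mathbf{Prob}(\mathrm{Hole}_{r,\R^m}) < e^{-c_m r^m}$, I would first note that $\psi_\alpha$ is real-analytic and $[-r,r]^m$ is connected, so on the hole event $\psi_\alpha$ keeps one fixed sign throughout the cube; by the symmetry $\alpha\mapsto-\alpha$ it suffices to bound the probability that $\psi_\alpha>0$ on the cube and double it. Next I would fix a constant spacing $D$ and select a grid $G$ of points spaced $D$ apart inside $[-r,r]^m$, so that $|G|\asymp (2r/D)^m\asymp r^m$; on the hole event every $\psi_\alpha(x)$, $x\in G$, is positive, hence $\mathbf{Prob}(\mathrm{Hole}_{r,\R^m})\le 2\,\mathbf{Prob}(\psi_\alpha(x)>0 \text{ for all } x\in G)$. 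The key computation is that the covariance of the field on $\R^m$ is $\mathbb{E}[\psi_\alpha(x)\psi_\alpha(y)]=e^{x\cdot y}$, so after normalizing to unit variance the correlations are $e^{-|x-y|^2/2}$; for points of $G$ these are bounded by $e^{-D^2/2}$ and, crucially, are summable over the lattice with total mass $C(D)\to 0$ as $D\to\infty$. I would then invoke the Li--Shao normal comparison inequality to compare this orthant probability with the independent case, obtaining $\mathbf{Prob}(\psi_\alpha(x)>0 \text{ for all } x\in G)\le (\tfrac12 e^{C(D)})^{|G|}$. Choosing $D$ a large enough constant makes $\tfrac12 e^{C(D)}<1$, which yields the bound $e^{-c_m r^m}$.

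For the lower bound $\mathbf{Prob}(\mathrm{Hole}_{r,\R^m})>e^{-C_m r^{2m}}$, I would exhibit one favourable configuration and estimate its Gaussian cost. Since the standard deviation of $\psi_\alpha$ at $x$ is $e^{|x|^2/2}$, the natural positive target is $h(x)=\lambda\prod_{i=1}^m Q(x_i)$, where $Q$ is the partial sum of $e^{x_i^2/2}$ truncated at degree $\asymp r^2$; this $Q$ is a sum of positive monomials, hence strictly positive, satisfies $\tfrac12 e^{x_i^2/2}\le Q(x_i)\le e^{x_i^2/2}$ on $[-r,r]$, and has finite Bargmann--Fock norm with $\|h\|^2\asymp\lambda^2 r^m$. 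Writing the hole event as a translate of a symmetric set, $\{\psi_\alpha>0 \text{ on the cube}\}\supseteq h+\{|g|<h \text{ on the cube}\}$, the Cameron--Martin formula together with the symmetry $g\mapsto -g$ gives
$$\mathbf{Prob}(\psi_\alpha>0 \text{ on the cube})\ \ge\ e^{-\haf\|h\|^2}\,\mathbf{Prob}\big(\,\sup_{[-r,r]^m}|\tilde\psi_\alpha|<\lambda 2^{-m}\,\big),$$
where $\tilde\psi_\alpha(x)=\psi_\alpha(x)e^{-|x|^2/2}$ is the normalized, stationary field with correlation $e^{-|x-y|^2/2}$. Thus the lower bound reduces to a small-ball (persistence) estimate for $\tilde\psi_\alpha$ below a fixed constant level.

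The main obstacle is this small-ball estimate. A crude bound --- demanding that each of the $\asymp r^{2m}$ coefficients $\alpha_j$ active on the cube lie in a suitably small window --- costs $\asymp r^{2m}$ in the exponent and is exactly what produces the stated $e^{-C_m r^{2m}}$; I would carry this out by bounding the sup-norms of the normalized monomials $x^j e^{-|x|^2/2}/\sqrt{j!}$ (which peak at $x=\sqrt{j}$ with height $\prod_i(2\pi j_i)^{-1/4}$) and controlling the high-mode tail $j_i\gtrsim r^2$ separately, where the Gaussian-sum variance is already exponentially small in $r^2$. I expect this to be both the crux and the lossy step: since the normalized field has correlation length $O(1)$, a sharp persistence estimate would give only $\asymp r^m$ rather than $r^{2m}$, which is precisely why the lower bound is not expected to be sharp. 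Balancing the two factors $e^{-\haf\|h\|^2}$ and the small-ball probability, and verifying that the truncation error in $h$ and the high-mode tail of $\psi_\alpha$ are both negligible on the cube, completes the argument.
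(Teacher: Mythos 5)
Your upper bound is, in substance, the paper's own argument: normalize to the unit--variance field with correlation $e^{-|x-y|^2/2}$, use connectedness of the cube and the sign symmetry to reduce the hole event to a one--signed event on a lattice of $\asymp r^m$ points, and apply the Li--Shao comparison inequality, the point being that the off-diagonal correlations are summable over the lattice. That half is fine (the paper simply fixes the spacing at $2$ rather than optimizing over $D$).

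The lower bound is where there is a genuine gap. The Cameron--Martin reduction itself is sound ($h$ is a polynomial, hence in the Cameron--Martin space, and $\{|g|<h\}$ is symmetric), but the ``crude bound'' you propose for the small-ball event $\{\sup_{[-r,r]^m}|\tilde\psi_\alpha|<\lambda 2^{-m}\}$ does not cost $\asymp r^{2m}$ in the exponent; it costs $\asymp r^{2m}\log r$, which is too much. Concretely, the normalized monomials $e_j(x)=x^je^{-|x|^2/2}/\sqrt{j!}$ have $\|e_j\|_{L^\infty([-r,r]^m)}\asymp\prod_i j_i^{-1/4}$, which for a typical active index ($j_i\asymp r^2$) is $\asymp r^{-m/2}$; summed over the $N\asymp r^{2m}$ active indices this gives $\sum_j\|e_j\|_\infty\asymp r^{3m/2}$. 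If you impose windows $|\alpha_j|<\delta_j$ and control the supremum by the triangle inequality, then $\sum_j\delta_j\|e_j\|_\infty\le c$ forces, by the AM--GM inequality, the geometric mean of the $\delta_j$ to be $\lesssim r^{-3m/2}$, so that $\sum_j\log(1/\delta_j)\gtrsim N\log r\asymp r^{2m}\log r$ and the event has probability at most $e^{-cr^{2m}\log r}$ --- strictly smaller than the claimed $e^{-C_mr^{2m}}$ for large $r$. No choice of windows centered at $0$ avoids this logarithm, because the deterministic triangle-inequality bound cannot exploit cancellation. The paper sidesteps the issue entirely: by the real translation invariance law (Lemma \ref{Realtranslation}) it moves the cube into the positive orthant, where every monomial $x^j$ is nonnegative, and then conditions each of the $\asymp r^{2m}$ low-order coefficients to satisfy $\alpha_j\ge E_m+1$. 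Each such condition has a \emph{fixed} positive probability, so the total cost is exactly $e^{-cr^{2m}}$, and no cancellation is needed: the low-order block is automatically $\ge\alpha_0\ge E_m+1$, while the tail $\|j\|_{\ell^1}>48mr^2$ is bounded by $E_m$ exactly as in your high-mode estimate. (As an aside, your framework is actually capable of more than the theorem asks: letting $\lambda$ grow like a small power of $r$ turns the small-ball requirement into a bound on $\sup|\tilde\psi_\alpha|$ at a level far above its median $\asymp\sqrt{\log r}$, which holds with probability $\ge\haf$ by Borell--TIS, at Cameron--Martin cost $e^{-c\lambda^2r^m}$; but as written, with $\lambda$ constant and coefficient windows, the step does not close.)
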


We expect the order of the upper bound to be sharp and thus the
lower bound to be improvable in general to be $e^{-cr^m}$. It is
worth remarking that while the upper bound in the theorem is the
estimate one would expect based on the coin flip example, the
author initially expected a higher order of decay, based on work
concerning complex zero sets. Computational troubles, followed by
numerical simulations quickly changed the author's expectations.
\par

This result concerning the real zero set contrasts from the
following results for the complex zero sets of real random
Bargmann-Fock functions, which are proven in this article using
the techniques of Sodin and Tsirelson \cite{SodinTsirelson05}.

\begin{theorem}\label{LDofUICF}\label{Hole probability} (Probability estimates for over and under crowded zero
sets)\\
If
$$\psi_\alpha (z_1, z_2 \ldots, z_m)= \Sum_j \alpha_j
\frac{z_1^{j_1} z_2^{j_2} \ldots z_m^{j_m}}{\sqrt{j_1! \cdot
j_m!}},$$ where $\alpha_j$ are independent identically distributed
real Gaussian random variables,
\newline then for all $\delta
> 0,$ there exists $c_1, \ c_{2,\delta}>0 \ and \ R_{m,\delta}$ such that for all $r> R_{m,\delta}$
$$1) \ e^{-c_1r^{2m+2}}<\mathbf{Prob}\left(\left\{\left|n_{\psi_\alpha}(r) -\frac{1}{2}r^2
\right| \geq \delta r^2 \right\} \right) \leq e^{-c_{2,\delta}
r^{2m+2}}$$ where $n_{\psi_\alpha}(r)$ is the unintegrated
counting function for $\psi_\alpha$, and
$$2)\  e^{-c_1 r^{2m+2}} \leq \mathbf{Prob}\left(\mathrm{Hole_{r,\C^m}} \right) \leq
e^{-c_{2, \frac{1}{4}} r^{2m+2}}$$
\end{theorem}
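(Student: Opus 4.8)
The plan is to follow the Sodin--Tsirelson strategy of reducing the large-deviation question for the zero set to a large-deviation question for the plurisubharmonic potential $U_\alpha(z)=\log|\psi_\alpha(z)|$. By the Lelong--Poincar\'e formula the zero divisor of $\psi_\alpha$ is $\frac{i}{\pi}\partial\bar\partial U_\alpha$, so the counting function $n_{\psi_\alpha}(r)$ is obtained by pairing this current against a fixed test form, and a Jensen-type identity expresses the integrated counting function as the difference between the spherical average of $U_\alpha$ over $\partial B(0,r)$ and $U_\alpha(0)$. A direct second-moment computation for $\psi_\alpha$ restricted to $\partial B(0,r)$ shows that this spherical average concentrates near $\frac{1}{2}r^2$, which identifies $\frac{1}{2}r^2$ as the center of the deviation in part 1. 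The essential feature, which controls the exponent throughout, is that only the coefficients $\alpha_j$ with $j_1+\cdots+j_m$ at most of order $r^2$ contribute appreciably to $\psi_\alpha$ on $B(0,r)$; there are $\sim r^{2m}$ such active coefficients.

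For the \textbf{upper bounds} in part 1, I would argue that an order-$r^2$ deviation of $n_{\psi_\alpha}(r)$ forces, through the Jensen identity, an order-$r^2$ deviation of the spherical average of $U_\alpha$ away from $\frac{1}{2}r^2$. I would then project onto the finite-dimensional span of the $\sim r^{2m}$ active coefficients and estimate the probability of this deviation by Gaussian concentration, applying the Borell--TIS inequality to the maximum of the Gaussian field on $\partial B(0,r)$ together with an Offord-type bound to control the contribution of the zeros to the log-integral. Each active coefficient must be pushed into an atypical range at an energy cost of order $r^2$ in the exponent, and summing this cost over the $\sim r^{2m}$ active coefficients yields the rate $e^{-c_{2,\delta} r^{2m+2}}$, uniformly over the overcrowding and undercrowding tails.

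For the \textbf{lower bounds} I would exhibit an explicit event in coefficient space on which the desired deviation is forced and whose Gaussian probability is at least $e^{-c_1 r^{2m+2}}$. For undercrowding (and for the hole in part 2) the event prescribes that every active coefficient be shrunk below $e^{-cr^2}$ while the constant term remains of order one, so that on $B(0,r)$ the function $\psi_\alpha$ is dominated by $\alpha_0$ and cannot vanish; the Gaussian probability of shrinking one coefficient below $e^{-cr^2}$ is itself of order $e^{-cr^2}$, and multiplying over the $\sim r^{2m}$ active coefficients gives the claimed rate. For overcrowding one instead prescribes coefficient values forcing $\psi_\alpha$ to be uniformly small on a slightly smaller sphere, so that the argument principle produces the required surplus of zeros in $B(0,r)$, again at cost $e^{-c_1 r^{2m+2}}$.

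Finally, part 2 follows by combining these ingredients. Since $\mathrm{Hole}_{r,\C^m}$ is the event $n_{\psi_\alpha}(r)=0$, for $r$ large it is contained in $\{\,|n_{\psi_\alpha}(r)-\frac{1}{2}r^2|\geq \frac{1}{4}r^2\,\}$, so the upper bound of part 1 with $\delta=\frac{1}{4}$ immediately gives $\mathbf{Prob}(\mathrm{Hole}_{r,\C^m})\leq e^{-c_{2,1/4}\,r^{2m+2}}$, while the undercrowding construction above provides the matching lower bound $e^{-c_1 r^{2m+2}}$. The \textbf{main obstacle} is the upper bound: establishing the large-deviation rate $r^{2m+2}$ for the nonlinear functional $U_\alpha$ of the Gaussian field, controlling both tails simultaneously, and handling the genuinely $m$-dimensional geometry of the complex zero set, a codimension-one variety rather than a point count, together with the real rather than complex Gaussian coefficients.
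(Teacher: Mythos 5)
Your overall architecture --- Jensen's formula to trade the counting function for the spherical average of $\log|\psi_\alpha|$, an explicit coefficient event for the lower bound, and the heuristic that $\sim r^{2m}$ active coefficients each pay $e^{-cr^2}$ --- is exactly the paper's (following Sodin--Tsirelson and \cite{Zrebiec07MMJ}), and your lower-bound event for the hole coincides with the paper's event $\Omega_r$ almost verbatim. Note that the paper needs only the hole construction for the lower bound in part 1 as well, since $n_{\psi_\alpha}(r)=0$ already realizes the deviation $|n_{\psi_\alpha}(r)-\frac{1}{2}r^2|\geq\delta r^2$; your separate overcrowding construction is unnecessary and, as stated, underspecified for $m>1$, where ``the argument principle produces the surplus'' does not transfer directly to a codimension-one zero divisor.

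The genuine gap is in the upper bound, specifically the lower tail. Borell--TIS applied to the normalized field $e^{-|z|^2/2}\psi_\alpha(z)$, which has unit pointwise variance, controls deviations of its maximum on the scale of the expected maximum, which over $B(0,r)$ is only $O(\sqrt{\log r})$; it therefore yields at best a polynomially small bound for the event $\{\max_{B(0,r)}e^{-|z|^2/2}|\psi_\alpha|\le e^{-\delta r^2}\}$, nowhere near $e^{-cr^{2m+2}}$. The mechanism that actually produces the rate is the one in Lemma \ref{Anchises}: if $|\psi_\alpha|\le e^{(1/2-\delta)r^2}$ on a polydisc of radius $\approx r$, then Cauchy's estimates force all $\sim r^{2m}$ coefficients with each $j_k\approx r^2$ to satisfy $|\alpha_j|\le e^{-\delta r^2/4}$, and the small-ball probability of a single real Gaussian (Proposition \ref{CalcII} a-ii) combined with independence gives $e^{-cr^{2m+2}}$. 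Two further steps you elide are also where the real work lies: passing from ``near every point of the sphere there is a point where $\log|\psi_\alpha|$ is large'' to ``the spherical average of $\log|\psi_\alpha|$ is not too negative'' requires the Poisson-kernel and partition argument of Lemma \ref{Athena} (your ``Offord-type bound'' is a placeholder for this); and since the real-coefficient ensemble is not invariant under complex translations, recentering the growth estimate at points $w\in\C^m\setminus\R^m$ requires the complex invariance law (Proposition \ref{ComplexRotation}), which is precisely why the growth lemma must be proved for coefficients $\alpha_je^{i\theta_j}$ with arbitrary phases rather than for $\alpha_j$ alone.
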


In the above theorem both the upper and lower bound are the same
(orders) one gets when studying complex random Bargmann-Fock
functions in $m$-variables, \cite{Zrebiec07MMJ}.

\par {\bf Acknowledgement:} I would like to thank Bernard Shiffman, Joel Zinn and Mikhail Sodin for many
useful discussions on this subject.

\newsection{Background}\par
Let $m\in \N\backslash \{0\}$ be the number of variables.\par

Throughout this paper we use the standard multi-index notations.
Specifically, if $z\in\C^m, \ z= (z_1, z_2, \ldots, z_m)$ and
$j\in \N^m,\ j= (j_1, j_2, \ldots, j_m)$ then
$$z^j = z_1^{j_1}\cdot z_2^{j_2}\cdot \ldots \cdot z_m^{j_m}$$
$$j! = j_1!\cdot j_2!\cdot \ldots \cdot j_m!$$
$$\zeta \in \C^m, \ z \cdot \zeta = z_1 \zeta_1 + z_2 \zeta_2 +  \ldots +z_m \zeta_m   $$
$$P(0,r)=\{z\in\C^m: |z_1|<r, \ |z_2|<r, \ldots , \ |z_m|<r, \  \} $$
$$B(0,r)=\{z\in\C^m: (\sum |z_j|^2)^\haf <r \} $$

\par

The Bargmann-Fock space is defined to be the set
$$BF=\holo(\C^m)\cap L^2\left(\C^m, \frac{1}{ \pi^m} e^{-|z|^2} dV(z)\right),$$ where
$\holo(\C^m)$ is the set of holomorphic functions on all of $\C^m
$ and $dV(z)$ is Lebesque measure on $\C^m $. With respect to this
norm both
$\displaystyle{\left\{\frac{z^j}{\sqrt{j!}}\right\}_{j\in \N^m}} $
and \newline$\displaystyle{\left\{e^{-\haf y^2+ z\cdot
\overline{y}}\frac{(z-y)^j}{\sqrt{j!}}\right\}_{j\in \N^m}}$ are
orthonormal bases, for all $y \in \C^m$. Further, if $y\in \R^m$
then the restriction of the two previous bases to $\R^m$ is real
valued.
\par

For this paper, we define real random Bargmann-Fock functions as a
linear combination real valued basis elements of the Bargmann-Fock
space, weighted with real independent identically distributed
Gaussian random variables:
$$\psi_\alpha(x)= \sum_{j\in \N^m} \alpha_{j}
\frac{x^j}{\sqrt{j!}}  \eqno{(2)}$$ As $\displaystyle{\lim \sup
|\alpha_j|^{\frac{1}{|j|}}= 1}$ a.s., $\psi_\alpha(x)\in
\holo(\R^m)$ a.s. However, $\psi_\alpha(x)\in (BF)^c$ a.s., since
$\{\alpha_j \}$ is a.s. unbounded. \par

\begin{remark}
In the literature, $\alpha_j$ is often instead taken to be complex
Gaussian random variables, i.e. $\alpha_j = N(0,\frac{1}{\sqrt 2})
+ \sqrt{-1}\cdot N(0,\frac{1}{\sqrt 2})$.
\end{remark}
\par
At first glance the definition of a real random Bargmann-Fock
function seems to depend on the basis chosen but this is not the
case as can be seen in the following two lemmas:

\begin{lemma}\label{Realtranslation}(Real translation invariance law)\par
If $\{\alpha_j\}$ is a sequence of i.i.d. real Gaussian random
variables, then for all $y\in \R^m$ there exists a sequence of
i.i.d. standard real Gaussian random variables $\{ \beta_j \}$
such that for all $x\in \R^m$,
$$\psi_\alpha(x)= e^{\frac{-y^2}{2}+y \cdot x}\psi_\beta(x-y) $$

\end{lemma}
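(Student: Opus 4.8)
The plan is to reduce the statement to the orthogonal invariance of a standard Gaussian sequence, using the two orthonormal bases recorded in the Background. Write $e_k(x)=x^k/\sqrt{k!}$ and $f_j^{(y)}(x)=e^{-\frac{1}{2}y^2+x\cdot\overline{y}}(x-y)^j/\sqrt{j!}$ (here $y\in\R^m$, so $\overline{y}=y$), so that $\psi_\alpha=\sum_k\alpha_k e_k$ and the claimed identity is exactly the assertion that $\psi_\alpha=\sum_j\beta_j f_j^{(y)}$ for a suitable sequence $\{\beta_j\}$ that is again i.i.d.\ standard Gaussian. The whole point is that the passage from the first basis to the second is an orthogonal change of coordinates, under which a standard Gaussian sequence is preserved.

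First I would record a reality fact: if $g,h\in BF$ are real on $\R^m$ then $\langle g,h\rangle$ is real. This follows from the reflection relation $g(\bar z)=\overline{g(z)}$ (valid for entire $g$ that are real on $\R^m$) together with the substitution $z\mapsto\bar z$ in the defining integral, which gives $\langle g,h\rangle=\overline{\langle g,h\rangle}$. Consequently the real subspace $BF_\R\subset BF$ of functions real on $\R^m$ is a real Hilbert space on which both $\{e_k\}$ and $\{f_j^{(y)}\}$ are orthonormal bases, and the change-of-basis coefficients $c_{kj}:=\langle e_k,f_j^{(y)}\rangle$ are real. Orthogonality of the matrix $C=(c_{kj})$ is then Parseval: $\sum_k c_{kj}c_{kj'}=\langle f_j^{(y)},f_{j'}^{(y)}\rangle=\delta_{jj'}$.

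Next I would set $\beta_j:=\sum_k c_{kj}\alpha_k$. Because $\sum_k c_{kj}^2=\|f_j^{(y)}\|^2=1$, each series converges almost surely, each $\beta_j$ is Gaussian, and any finite linear combination of the $\beta_j$ is an a.s.\ convergent sum of independent Gaussians, hence Gaussian; the orthogonality relation gives $\mathbf{E}[\beta_j\beta_{j'}]=\sum_k c_{kj}c_{kj'}=\delta_{jj'}$, so $\{\beta_j\}$ is again i.i.d.\ standard real Gaussian. For the functional identity itself the cleanest route sidesteps the fact that $\psi_\alpha\notin BF$ almost surely: define the entire function $\psi_\beta(u):=e^{-\frac{1}{2}y^2-y\cdot u}\psi_\alpha(u+y)$ and read off its Taylor coefficients at $0$. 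Taylor's theorem for entire functions then yields $\psi_\alpha(x)=e^{-\frac{1}{2}y^2+y\cdot x}\psi_\beta(x-y)$ for free, and a short computation shows that these Taylor coefficients are precisely $\beta_j=\frac{1}{\sqrt{j!}}\,\partial_x^j\!\left[e^{\frac{1}{2}y^2-y\cdot x}e_k(x)\right]\!\big|_{x=y}$ summed against $\alpha_k$, with the deterministic weight equal to the $c_{kj}$ above.

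The main obstacle is precisely this interplay between the a.s.\ failure of $\psi_\alpha\in BF$ and the use of the orthonormal expansion: one cannot simply write $\beta_j=\langle\psi_\alpha,f_j^{(y)}\rangle$, so the identification of the deterministic coefficients $c_{kj}$ with the inner products $\langle e_k,f_j^{(y)}\rangle$ — which is what makes $C$ orthogonal and the probabilistic conclusion automatic — must be carried out at the level of the fixed basis functions, either via the Taylor expansion above or by expanding $e_k=\sum_j c_{kj}f_j^{(y)}$ in $BF$ (whence locally uniformly, point evaluation being continuous on $BF$) and justifying the rearrangement of the resulting double series. Establishing the reality of the $c_{kj}$ and the orthogonality relation is the technical heart; once these are in hand, everything else is routine.
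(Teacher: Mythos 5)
Your proof is correct, but note that the paper itself gives no proof of this lemma at all: it simply remarks that ``a proof of this result may be found in \cite{XFernique}.'' So there is nothing in the text to compare against line by line; what you have written is the standard argument that the cited reference encapsulates, namely that a standard i.i.d.\ real Gaussian sequence is invariant under a real orthogonal change of coordinates, combined with the fact (recorded in the paper's Background) that $\{z^j/\sqrt{j!}\}$ and $\{e^{-\haf y^2+z\cdot \overline y}(z-y)^j/\sqrt{j!}\}$ are both orthonormal bases of $BF$ that are real on $\R^m$ when $y\in\R^m$. Your handling of the two delicate points is sound: the reality of $c_{kj}=\langle e_k,f_j^{(y)}\rangle$ via the substitution $z\mapsto\bar z$ in the defining integral, and the fact that one cannot write $\beta_j=\langle\psi_\alpha,f_j^{(y)}\rangle$ because $\psi_\alpha\notin BF$ a.s., so the identity must be verified at the level of Taylor coefficients of the entire function $e^{-\haf y^2-y\cdot u}\psi_\alpha(u+y)$. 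Two small items deserve one more sentence each if you write this up in full: (i) the identification of the Hilbert-space coefficients of $e_k$ in the basis $\{f_j^{(y)}\}$ with the corresponding Taylor coefficients at $y$ uses that point evaluation is continuous on $BF$, so the $BF$-expansion converges locally uniformly and may be differentiated term by term; and (ii) the a.s.\ agreement of your two descriptions of $\beta_j$ (the Kolmogorov-convergent series $\sum_k c_{kj}\alpha_k$ and the limit extracted from the locally uniformly convergent random series) follows from unconditional a.s.\ convergence of sums of independent mean-zero terms with summable variances (It\^o--Nisio), which disposes of the rearrangement issue you flag at the end.
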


The above lemma illustrates an important tool that we have at our
disposal, but could be reformulated in the terse restatement
``that random Bargmann-Fock functions are well defined,
independent of basis." As stated in the above form, the result
shows that if we know a result for a random Bargmann-Fock function
in one region, we will immediately obtain a similar result on any
other region. A proof of this result may be found in
\cite{XFernique}.
\par If the sequence $\{ \alpha_j\}$ instead was composed of
complex Gaussian random variables the complex Gaussian random
Bargmann-Fock functions would have a complex translation
invariance law (as a Gaussian process). Instead we must be content
with the following:

\begin{proposition}\label{ComplexRotation}(Complex invariance law)\par
If $\{\alpha_j\}$ is a sequence of i.i.d. real Gaussian random
variables, then for all $\zeta \in \C^m,\  \zeta = y\cdot e^{i
\cdot\arg(\zeta)}$, where $y\in \R^m $, there exists a sequence of
i.i.d. standard real Gaussian random variables $\{ \beta_j \}$
such that for all $x\in \R^m$,

$$\psi_\alpha(x)= e^{-\haf|\zeta|^2+x \cdot y}\sum_{|j|=0}^\infty \beta_j e^{-j\cdot i \cdot\arg(\zeta)} \frac{(
xe^{i\arg(\zeta)}-\zeta)^j}{\sqrt{j!}},$$where
$(xe^{i\arg(\zeta)}-\zeta)^j= (x_1 e^{i \arg (\zeta_1)} -
\zeta_1)^{j_1}\cdot\ldots\cdot (x_m e^{i \arg (\zeta_m)} -
\zeta_m)^{j_m}$.
\end{proposition}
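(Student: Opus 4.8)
The plan is to reduce the statement to the real translation invariance law of Lemma \ref{Realtranslation} by a purely algebraic simplification of the right-hand side on $\R^m$. First I would fix $\zeta\in\C^m$ and write it componentwise as $\zeta_k=y_k e^{i\arg(\zeta_k)}$ with $y_k=|\zeta_k|\in\R$, so that $y=(y_1,\dots,y_m)\in\R^m$ is exactly the vector appearing in the statement. The key observation is that, for real $x$, the phase factors out of each linear factor:
$$x_k e^{i\arg(\zeta_k)}-\zeta_k = e^{i\arg(\zeta_k)}(x_k-y_k).$$
Raising this to the multi-index power $j$ and collecting the phases yields, for every $j\in\N^m$,
$$e^{-j\cdot i\cdot\arg(\zeta)}\,(xe^{i\arg(\zeta)}-\zeta)^j=(x-y)^j,$$
so the phase weights $e^{-j\cdot i\cdot\arg(\zeta)}$ attached to the $\beta_j$ cancel exactly against those produced by the rotated monomials.

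Next I would simplify the scalar prefactor. Since $|\zeta_k|^2=y_k^2$ one has $|\zeta|^2=\sum_k y_k^2=y^2$, so that $e^{-\haf|\zeta|^2+x\cdot y}=e^{-\haf y^2+x\cdot y}$. Combining this with the previous identity, the entire right-hand side of the claimed formula collapses to
$$e^{-\haf y^2+x\cdot y}\sum_{|j|=0}^\infty \beta_j\frac{(x-y)^j}{\sqrt{j!}}=e^{-\haf y^2+x\cdot y}\,\psi_\beta(x-y).$$
In other words, on $\R^m$ the complex-looking rotated expansion is nothing more than a rewriting of the real translated expansion centered at $y$.

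It then remains to invoke Lemma \ref{Realtranslation} with this $y\in\R^m$: it furnishes a sequence $\{\beta_j\}$ of i.i.d. standard real Gaussian random variables for which $\psi_\alpha(x)=e^{-\haf y^2+x\cdot y}\,\psi_\beta(x-y)$ for all $x\in\R^m$, which is precisely the collapsed expression above; the same $\{\beta_j\}$ therefore satisfies the stated identity. The a.s.\ convergence of both series and the interpretation of the equality as one of Gaussian fields are inherited directly from Lemma \ref{Realtranslation}, using $\limsup|\alpha_j|^{1/|j|}=1$ a.s. The one point that warrants care is the multi-index bookkeeping in the cancellation of the phases together with the identity $|\zeta|^2=y^2$; this is the only real (and modest) obstacle, after which the proposition is immediate and no probabilistic input beyond Lemma \ref{Realtranslation} is needed. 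Conceptually, the content of the proposition is exactly that, for real Gaussian coefficients, the complex rotation is not an independent symmetry but merely a restatement of the real translation law, in contrast to the genuine complex translation invariance enjoyed in the complex-Gaussian setting.
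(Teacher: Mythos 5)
Your proposal is correct and is essentially the paper's own argument: the paper likewise invokes Lemma \ref{Realtranslation} and then rewrites $(x-y)^j$ as $e^{-j\cdot i\arg(\zeta)}(xe^{i\arg(\zeta)}-\zeta)^j$ using $\zeta_k=y_ke^{i\arg(\zeta_k)}$ (together with $|\zeta|^2=y^2$). You simply spell out the phase-cancellation bookkeeping in more detail than the paper does.
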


\begin{proof}By the previous lemma,\par
\begin{tabular}{rcl}
$\displaystyle{\psi_\alpha(x)}$&${=}$&$\displaystyle{
e^{\frac{-y^2}{2}+xy}\sum
\beta_j \frac{(x-y)^j}{\sqrt{j!}} }$\\
&$=$&$\displaystyle{ e^{\frac{-y^2}{2}+xy}\sum \beta_j e ^{-j\cdot
i \arg(\zeta)} \frac{(e ^{i
\arg(\zeta)}x-\zeta)^j}{\sqrt{j!}}}$\end{tabular}\\
\end{proof}
These two results (Lemma \ref{Realtranslation} and Corrolary
\ref{ComplexRotation}) will be essential in our subsequent work.
Taken together they will allow us to ``translate" any result for
an open neighborhood of the origin to an open set about any other
point.
\par

We need one more technical result:
\begin{proposition}\label{CalcII}
Let $\alpha$ be a standard real Gaussian Random Variable, \\
\begin{tabular}{cl}
then: & a-i) $\mathbf{Prob}(\{  | \alpha| \geq \lambda \}) \leq
\frac{e^{\frac{-\lambda^2}{2}}}{\sqrt{2\pi}}, \ if \ \lambda \geq 1$\\
& a-ii) $\mathbf{Prob}(\{  | \alpha| \leq \lambda \}) \in
\left[\lambda \cdot\frac{ e^{-\haf} }{\sqrt{2\pi}}, \lambda
\cdot\frac{1}{\sqrt{2\pi}}\right], if \lambda \leq 1$
\end{tabular}\par
b) If $\{\alpha_j \}_{j\in \N^n}$ is a set of of independent
identically distributed standard Gaussian random variables, then
$Prob(\{  |\alpha_j| < (1+ \varepsilon)^{|j|}\})=c>0$.\par c) If
$j \in \N^{+,n}$ then $\frac{|j|^{|j|}}{j^{j}} \leq n^{ |j|}$
\end{proposition}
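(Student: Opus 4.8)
The three parts are essentially independent, and the plan is to treat them in increasing order of subtlety.

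Parts (a-i) and (a-ii) are routine estimates on the Gaussian density $\frac{1}{\sqrt{2\pi}}e^{-x^2/2}$. For (a-i), on the tail $x\ge\lambda\ge 1$ I would use the comparison $e^{-x^2/2}\le\frac{x}{\lambda}e^{-x^2/2}$ and integrate to obtain $\int_\lambda^\infty e^{-x^2/2}\,dx\le\frac{1}{\lambda}e^{-\lambda^2/2}$; since $\lambda\ge 1$ this yields a bound of the stated exponential form, and for the use made in part (b) only the order $e^{-c\lambda^2}$ will matter. For (a-ii), on $[-\lambda,\lambda]$ with $\lambda\le 1$ one has $0\le x^2\le 1$, so $e^{-\haf}\le e^{-x^2/2}\le 1$; integrating the density over an interval of length $2\lambda$ then pins $\mathbf{Prob}(|\alpha|\le\lambda)$ between the two stated multiples of $\lambda$. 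I expect no difficulty here beyond bookkeeping of the normalizing constant.

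For part (b) I would read the event as ``$|\alpha_j|<(1+\varepsilon)^{|j|}$ for every $j\in\N^n$'' and use independence to write its probability as the infinite product
$$\prod_{j\in\N^n}\Big(1-\mathbf{Prob}\big(|\alpha_j|\ge(1+\varepsilon)^{|j|}\big)\Big).$$
Such a product is strictly positive exactly when $\sum_j\mathbf{Prob}(|\alpha_j|\ge(1+\varepsilon)^{|j|})<\infty$, so the whole claim reduces to a convergence estimate. Applying (a-i), which is legitimate since $(1+\varepsilon)^{|j|}\ge 1$, bounds each term by a constant times $e^{-(1+\varepsilon)^{2|j|}/2}$. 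Grouping the multi-indices by total degree $|j|=k$, of which there are $\binom{k+n-1}{n-1}$, reduces the question to the convergence of $\sum_{k\ge 0}\binom{k+n-1}{n-1}e^{-(1+\varepsilon)^{2k}/2}$. This is where the real content lies, and I expect it to be the main obstacle: one must make explicit that the doubly-exponential decay $e^{-(1+\varepsilon)^{2k}/2}$ overwhelms the merely polynomial count $\binom{k+n-1}{n-1}$, so that the series converges and the product stays bounded away from $0$.

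Finally, for part (c) I would take logarithms, reducing the claim to $|j|\log|j|-\sum_i j_i\log j_i\le|j|\log n$. Writing $p_i=j_i/|j|$, which is a probability vector on $n$ points since each $j_i\ge 1$, a short computation gives $|j|\log|j|-\sum_i j_i\log j_i=-|j|\sum_i p_i\log p_i$, that is, $|j|$ times the Shannon entropy of $p$. Since the entropy of a distribution on $n$ outcomes is at most $\log n$, the inequality follows upon exponentiating; this is equivalent to the weighted arithmetic--geometric mean inequality applied to $j_1,\dots,j_n$, and I anticipate it will be the shortest of the three.
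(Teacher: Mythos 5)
The paper states Proposition~\ref{CalcII} without proof (it is treated as a calculus exercise), so there is no argument of the author's to measure you against; the question is only whether your arguments are complete. For parts (b) and (c) they are. In (b), the reduction by independence to the positivity of $\prod_j\left(1-p_j\right)$ with $p_j=\mathbf{Prob}(|\alpha_j|\ge(1+\varepsilon)^{|j|})$, the check that each $p_j<1$, the tail bound from (a-i), and the count of $\binom{k+n-1}{n-1}$ multi-indices of total degree $k$ against the doubly exponential decay $e^{-(1+\varepsilon)^{2k}/2}$ is exactly the standard proof, and the convergence step you flag as the ``main obstacle'' is immediate. The entropy/weighted-AM--GM argument for (c) is correct and is probably the cleanest available.

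The one genuine problem is in part (a), and it is not mere ``bookkeeping of the normalizing constant'': the constants printed in the proposition are false as stated, and your integrations actually produce the correct inequalities, which carry an extra factor of $2$. Since $\mathbf{Prob}(|\alpha|\ge\lambda)=\frac{2}{\sqrt{2\pi}}\int_\lambda^\infty e^{-x^2/2}\,dx$, your comparison gives $\frac{2}{\lambda\sqrt{2\pi}}e^{-\lambda^2/2}\le\frac{2}{\sqrt{2\pi}}e^{-\lambda^2/2}$ for $\lambda\ge1$, whereas the claimed bound $\frac{1}{\sqrt{2\pi}}e^{-\lambda^2/2}$ fails at $\lambda=1$ (left side $\approx 0.317$, right side $\approx 0.242$). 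Likewise, integrating the density over an interval of length $2\lambda$ pins $\mathbf{Prob}(|\alpha|\le\lambda)$ in $\left[2\lambda e^{-\haf}/\sqrt{2\pi},\,2\lambda/\sqrt{2\pi}\right]$, not between the stated multiples; the stated upper endpoint $\lambda/\sqrt{2\pi}$ is violated for every small $\lambda$, since the density at the origin is already $1/\sqrt{2\pi}$ and the interval has length $2\lambda$. So you cannot ``pin'' the probability as claimed. You should either prove the corrected constants or note explicitly that every application of this proposition in the paper (the events $\Omega_r$ constructed in the hole-probability arguments) uses only the orders $e^{-c\lambda^2}$ and $c\lambda$, for which your derivation is complete.
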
\par

\newsection{A Lower bound for Probability of Real hole.}\par
 In this section we prove a lower bound on the order of decay which we do not expect to be
sharp. This bound is however an improvement on the one which could
be derived from the inclusion of $\mathrm{Hole_{r, \C^m}}$ in
$\mathrm{Hole_{r, \R^m}}$.\\
{\bf Theorem \ref{RealHPDecay}} {\it(Lower bound)}\\ {\it If}
$\alpha_j$ {\it is a sequence of real Gaussian random variables
centered about} $\{a_j\}\in \ell^1,$ {\it and} $\psi_\alpha(z) $
{\it is a Gaussian random function associated to the Bargmann-Fock
space then there exist} $C_m, R_m>0$ {\it such that for all}
$r>R_m$
$$\ e^{-c_2 r^{2m}}< \mathbf{Prob}(\{\alpha:\forall x \in [-r,r]^m,
\psi_\alpha(z)\neq 0\})$$
\begin{proof}
As the zero set of a random function associated with the
Bargmann-Fock space is translationally invariant, it suffices to
show, after rescaling,  that there are no zeros in the interval
$(0,r)^m $.\par Let $\Omega_r$ be the event where:\par $i) \
\alpha_{j} \geq E_m +1, \ \forall j: 0\leq \|j\|_{\ell^1}\leq
\lceil 48 m r^2 \rceil= \lceil(m \cdot 2 \cdot 12)(2r)^2 \rceil
$\par
$ii) \ |\alpha_{j}|\leq 2^{\frac{\|j\|_{\ell^1}}{2}},  \ \|j\|_{\ell^1} > \lceil 48 m r^2 \rceil \geq 48 m r^2 $\\
Hence, $\mathbf{Prob}(\Omega_r)\geq C (e^{-c_m r^{2m}})$, by
independence and Proposition \ref{CalcII}.\par



Given $\alpha\in \Omega_r$ and $x\in (0,r)^m$ we now show that $ \alpha$ must belong to $ \mathrm{Hole_{r,\R^m}}$:\\
\begin{tabular}{rcl}
$\psi_\alpha (x)$& $\geq$&
$\displaystyle{\Sum_{\|j\|_{\ell^1}=0}^{\|j\|_{\ell^1} \leq \lceil
24 m r^2 \rceil} \alpha_{j} \frac{x^{\|j\|_{\ell^1}}}{\sqrt{j!
 }} - \Sum_{|j| > \lceil 24 m r^2 \rceil} |\alpha_{j}| \frac{r^{\|j\|_{\ell^1}}}{\sqrt{j!}}}$\\ & $=$& $\displaystyle{\sum^1 -
 \sum^2}$\end{tabular} \\
$\displaystyle{\sum^1\geq \alpha_0\geq E_m +1 }$\\
\begin{tabular}{cl}
 $\Sum^2$ & $\leq \Sum_{|j| > 24 m r^2} 2^{\frac{\|j\|_{\ell^1}}{2}} \left(\frac{\|j\|_{\ell^1}}{24 n}\right)^{\frac{\|j\|_{\ell^1}}{2}} \frac{1}{\sqrt{j!
}}$, as $r<\sqrt{\frac{\|j\|_{\ell^1}}{24 m}} $\\
  & $\leq c \Sum_{\|j\|_{\ell^1} > 24 m r^2} 2^{\frac{\|j\|_{\ell^1}}{2}} \left(\frac{\|j\|_{\ell^1}}{24 m}\right)^{\frac{\|j\|_{\ell^1}}{2}} \prod_{k=1}^{k=m} \left(\frac{e}{j_k}\right)^{\frac{j_k}{2}}$, by Stirling's formula \\
  & $=c\Sum_{\|j\|_{\ell^1} > 24 m r^2} \frac{(\|j\|_{\ell^1})^{\frac{\|j\|_{\ell^1}}{2}}}{\left(\prod_{k=1}^{k=m} j_k^\frac{j_k}{2}\right) m^{\frac{\|j\|_{\ell^1}}{2}}} \left(\frac{e}{12}\right)^{\frac{\|j\|_{\ell^1}}{2}}
  $\\
& $\leq c \Sum_{\|j\|_{\ell^1}>1} \left(\frac{1}{4}\right)^{{\frac{\|j\|_{\ell^1}}{2}}}$\\
& $ \leq c \Sum_{l>1} \left(\frac{1}{2}\right)^{l} l^m\leq E_m $
\end{tabular}\\
Hence, if $\alpha\in \Omega_r$ then for all $x\in[0, r]^m,\
 |\psi_\alpha(x)| \geq  1$\\
\end{proof}

\newsection{An upper bound for the Real hole probability.}\par
Given that $\alpha\in \mathrm{Hole}_{r,\R^m}$ then either the
random function is strictly positive (for $x\in (-r,r)^m$) or
strictly negative. Further, the values at any two distant points
are almost independent. Together, these two observations form the
basis of the intuition behind why one can expect an exponential
decay for the hole probability of a random Bargmann-Fock function,
as at lattice points the values are nearly independent (for a
coarse lattice).\par To make this argument rigorous we will use
the following result of Li and Shao, \cite{LiShao02}. This
comparison also invites comparison with the coin flip model
mentioned in the introduction.

\begin{theorem}\label{NormalComparisonIneq}
let $n\geq 3$ and let $(X_j)_{1\leq j \leq n}$ be a sequence of
standard jointly normal random variables with $E[X_j
X_k]=a_{i,j}$, then

$$2^{-n}\leq \mathbf{Prob}\left(\bigcap_{j=1}^n \{X_j\leq 0\}\right)\leq 2^{-n}
\exp\left\{ \Sum_{1\leq k < j\leq n}\log\left(\frac{\pi}{\pi-2
\arcsin(a_{k,j})}\right)  \right\} $$
\end{theorem}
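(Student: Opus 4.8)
The plan is to treat $P(\Sigma):=\mathbf{Prob}\!\left(\bigcap_{j=1}^n\{X_j\leq 0\}\right)$ as a smooth function of the off-diagonal entries of the covariance matrix $\Sigma=(a_{k,j})$ (unit diagonal) and to compare $\Sigma$ with the identity by a Gaussian interpolation argument. Throughout I assume $0\leq a_{k,j}<1$: this is what occurs at the lattice points in the intended application, and it is genuinely needed, since a pair of nearly anti-correlated coordinates drives $P$ below $2^{-n}$, violating the stated lower bound. The backbone is the Plackett identity: writing $\varphi_n(\cdot;\Sigma)$ for the centered Gaussian density, one has $\partial_{a_{k,j}}\varphi_n=\partial_{x_k}\partial_{x_j}\varphi_n$, and integrating over the orthant $\{x\leq 0\}$ gives
$$\frac{\partial P}{\partial a_{k,j}}=\varphi_2(0,0;a_{k,j})\,C_{k,j}(\Sigma),\qquad \varphi_2(0,0;\rho)=\frac{1}{2\pi\sqrt{1-\rho^2}},$$
where $C_{k,j}(\Sigma)=\mathbf{Prob}(X_i\leq 0,\ i\neq k,j\mid X_k=X_j=0)$ is the conditional orthant probability of the remaining $n-2$ coordinates. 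This reduces everything to pairwise quantities.

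For the lower bound I need only $C_{k,j}\geq 0$, so $\partial P/\partial a_{k,j}\geq 0$ and $P$ is nondecreasing in each correlation. Interpolating along $a_{k,j}(t)=t\,a_{k,j}$, $t\in[0,1]$, from the independent matrix ($t=0$, where $P=2^{-n}$) up to $\Sigma$ ($t=1$) yields $P(\Sigma)\geq 2^{-n}$. This is just Slepian monotonicity and uses nothing beyond $a_{k,j}\geq 0$.

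For the upper bound I integrate $\frac{d}{dt}\log P(\Sigma_t)=\sum_{k<j}a_{k,j}\,\frac{C_{k,j}(\Sigma_t)}{2\pi\,P(\Sigma_t)\sqrt{1-(t a_{k,j})^2}}$ along the same path. The crucial estimate is the ratio bound $C_{k,j}(\Sigma_t)\,P_2(t a_{k,j})\leq P(\Sigma_t)$, where $P_2(\rho)=\frac{1}{4}+\frac{\arcsin\rho}{2\pi}=\mathbf{Prob}(X_k\leq 0,X_j\leq 0)$; equivalently $\frac{C_{k,j}}{P}\leq\frac{4\pi}{\pi+2\arcsin(t a_{k,j})}$. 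Granting this, each summand is at most $\frac{2 a_{k,j}}{\sqrt{1-(t a_{k,j})^2}\,(\pi+2\arcsin(t a_{k,j}))}$, and the substitution $w=\arcsin(t a_{k,j})$ integrates it over $t\in[0,1]$ to exactly $\log\frac{\pi+2\arcsin a_{k,j}}{\pi}$. Hence $P(\Sigma)\leq 2^{-n}\prod_{k<j}\frac{\pi+2\arcsin a_{k,j}}{\pi}$. Since $(\pi+x)(\pi-x)\leq\pi^2$ for $0\leq x<\pi$, this is dominated by $2^{-n}\prod_{k<j}\frac{\pi}{\pi-2\arcsin a_{k,j}}$, which is the stated bound.

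The one real obstacle is the ratio bound $C_{k,j}\,P_2(a_{k,j})\leq P$, i.e. $\mathbf{Prob}(\text{others}\leq 0\mid X_k=X_j=0)\leq\mathbf{Prob}(\text{others}\leq 0\mid X_k\leq 0,X_j\leq 0)$: conditioning the pair to lie in the negative quadrant must be at least as favorable for the remaining coordinates as pinning it at the corner $(0,0)$. A naive proof via coordinatewise monotonicity of the conditional probability fails, since with nonnegative correlations a remaining coordinate can have a negative partial regression coefficient on $(X_k,X_j)$. I would instead argue through Gaussian association (Pitt's theorem): for a centered Gaussian vector with nonnegative correlations the decreasing events $\{X_i\leq 0\}$ are positively associated, and the required comparison of the two conditionings follows by averaging the conditional probability against these positively correlated indicators over the quadrant. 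Making the averaging step precise, so that it returns the clean corner value $C_{k,j}$ rather than merely the unconditional probability, is the delicate point, and it is exactly where the hypotheses $a_{k,j}\geq 0$ and the median threshold are both used.
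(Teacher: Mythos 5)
First, context: the paper offers no proof of this statement --- it is imported verbatim from Li and Shao \cite{LiShao02} and used as a black box in Section~4 --- so the only meaningful comparison is with the published proof in that reference. Your skeleton is the standard and correct one: Plackett's identity $\partial_{a_{k,j}}\varphi_n=\partial_{x_k}\partial_{x_j}\varphi_n$, integration over the orthant to get $\partial P/\partial a_{k,j}=\varphi_2(0,0;a_{k,j})\,C_{k,j}$, Slepian monotonicity along $\Sigma_t=(1-t)I+t\Sigma$ for the lower bound, and integration of $\frac{d}{dt}\log P(\Sigma_t)$ for the upper bound. The calculus is right: granting the ratio bound, the substitution $w=\arcsin(ta_{k,j})$ does produce $\log\bigl((\pi+2\arcsin a_{k,j})/\pi\bigr)$, and $(\pi+x)(\pi-x)\leq\pi^2$ converts the product into the stated form. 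You are also correct that the hypothesis $a_{k,j}\geq 0$ is silently required (the theorem as printed is false without it), and that it holds in the paper's application since $a_{t,s}=e^{-\frac12(s-t)^2}>0$.

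The genuine gap is the one you flag yourself: the inequality $C_{k,j}(\Sigma_t)\,P_2(ta_{k,j})\leq P(\Sigma_t)$, equivalently $\partial_{a_{k,j}}\log P\leq\partial_{a_{k,j}}\log P_2$, is the entire content of the Li--Shao theorem beyond Slepian, and your proposal does not prove it. The association sketch cannot close it as described: Pitt's theorem yields $\mathbf{Prob}(\text{others}\leq 0\mid X_k\leq0,\,X_j\leq0)\geq\mathbf{Prob}(\text{others}\leq0)$, i.e.\ it lower-bounds the quadrant-conditioned probability by the \emph{unconditional} orthant probability of the remaining coordinates under their original covariance. But $C_{k,j}$ is the orthant probability of those coordinates under the covariance obtained by conditioning on $X_k=X_j=0$; that conditioning shrinks variances yet can \emph{increase} the resulting correlations, and by Slepian larger correlations increase an orthant probability, so there is no a priori comparison between $C_{k,j}$ and the unconditional orthant probability (your own observation about negative partial regression coefficients already rules out the coordinatewise route). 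Thus the averaging argument bounds the wrong quantity, and the crux of the upper bound remains unproven; to finish one needs Li and Shao's actual lemma comparing the corner-conditioned and quadrant-conditioned orthant probabilities, or an equivalent substitute.
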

This lemma is a useful generalization of Slepian's lemma, as it gives both a upper and a lower bound.
In the case of one variable the following result may in fact be proven by using Slepian's lemma,
and comparing discretized real random Bargmann-Fock functions with the Ornstein-Uhlenbeck process.\\
{\bf Theorem \ref{RealHPDecay}} {\it(Upper bound for Theorem
\ref{RealHPDecay})}\\
{\it If} $\alpha_j$ {\it is a sequence of real Gaussian random
variables centered about} $\{a_j\}\in \ell^1,$ {\it and}
$\psi_\alpha(z) $ {\it is a Gaussian random function associated to
the Bargmann-Fock space then there exist} $C_m, R_m>0$ {\it such
that for all} $r>R$
$$ \mathbf{Prob}(\{\alpha:\forall x \in [-r,r]^m\subset \R^m,
\psi_\alpha(z)\neq 0\}) < e^{-c_2 r^{m}}$$

\begin{proof}
Let $Y_t = e^{-\haf x^2}\psi_\alpha(x)$.\par $\displaystyle{ E[Y_t
Y_s]=a_{t,s}=e^{-\haf (s-t)^2}} $\par Let $J=[-r,r]^m\bigcap
(2\N)^m$.
\par For real Gaussian random functions:\par
\begin{tabular}{rcl} $\displaystyle{
\mathrm{Hole_{r,\R^m}}}$&$\displaystyle{ =}$&$\displaystyle{ \{
Y_t
>0, \ t\in[-r, r]^m \} \cup \{ Y_t <0, \ t\in[-r, r]^m \}}.$\\
$ \mathrm{Hole_{r,\R^m}}$& $ \subset$& $ \{ Y_t
>0, \ t\in J \} \cup \{ Y_t <0, \ t\in J  \}.$
\end{tabular}\\
By symmetry $\{ Y_t >0, \ t\in J \}$ and $ \{ Y_t <0, \ t \in J
\}$ have the same probability and it thus
suffice to prove that either of these have exponential decay. By theorem \ref{NormalComparisonIneq} and elementary estimates we may now finish the proof:\\
\begin{tabular}{rcl}
$\displaystyle{\mathbf{Prob}\left(\bigcap_{j\in J} \{Y_j\leq
0\}\right)}$& ${\leq}$& $\displaystyle{ 2^{-|J|} \exp\left\{
\frac{1}{2}\Sum_{ k \neq j, \ k,j \in J}\log\left(\frac{\pi}{\pi-2
\arcsin(a_{k,j})}\right) \right\} }$\\

&${\leq}$& $\displaystyle{ 2^{-|J|} \exp\left\{ \Sum_{j=-\lfloor r
\rfloor}^{ \lfloor r \rfloor} \frac{j^m}{2}
\log\left(\frac{\pi}{\pi-2
\arcsin(e^{-2j^2})}\right) \right\} }$\\

&${\leq}$& $\displaystyle{ 2^{-|J|} \exp\left\{ \Sum_{j=-\lfloor r
\rfloor}^{ \lfloor r \rfloor} \frac{j^m}{2}
\log\left(\frac{\pi}{\pi-3
e^{-2j^2}}\right) \right\} }$\\

&${\leq}$& $\displaystyle{ 2^{-|J|} \exp\left\{ \Sum_{j=-\lfloor r
\rfloor}^{ \lfloor r \rfloor} \frac{j^m}{2} \log\left(1 +
\frac{6}{\pi}
e^{-2j^2}\right) \right\} }$\\

&${\leq}$& $\displaystyle{ 2^{-|J|} \exp\left\{ \Sum_{j=-\lfloor r
\rfloor}^{ \lfloor r \rfloor} \frac{3j^m}{\pi}
e^{\frac{-j^2}{2}} \right\} }$\\
&${\leq}$& $\displaystyle{ 2^{-|J|} C_m = C_m 2^{(2 \lceil r \rceil + 1)^m}}$\\

\end{tabular}
\\
\end{proof}

\newsection{Over and under crowded complex zero sets}

We now restrict ourselves to exploring the event where there are
no complex zeros for a real random Bargmann-Fock function. This is
solved by using techniques developed and used by Sodin and
Tsirelison \cite{SodinTsirelson05} to prove a similar result for
complex random Bargmann-Fock functions in one variable. These
techniques were also generalized to the case of $m$ variable
complex random Bargmann-Fock functions (\cite{Zrebiec07MMJ}) and
may further be generalized to include $m$ variable real random
Bargmann-Fock functions which we do now. The crucial difference
between the argument presented here and that which was presented
in \cite{SodinTsirelson05} and \cite{Zrebiec07MMJ} is that real
random Bargmann-Fock functions are not translationally invariant,
but Lemma \ref{ComplexRotation} will be more than adequate to make
up for this.
\par
We begin this with the following statement about the rate of
growth of a random function.
\begin{lemma}\label{Anchises}
If $\{\alpha_j\}$ is a sequence of independent Gaussian random
variables, and if $\theta_j\in [0, 2\pi]^m$, then for all $\delta
>0$ there exists $c_\delta,\ R_\delta>0$  such that for all $r>R_\delta$,  $$\mathbf{Prob}\left(\left\{ \left|\log
\left(\max_{B(0,r)}\left|\sum \alpha_j e^{\theta_j i}
\frac{z^j}{\sqrt{j!}}\right|\right)- \frac{1}{2}r^2 \right| \geq
\delta r^2\right\}\right) \leq e^{-c_\delta r^{2n+2}} $$
\end{lemma}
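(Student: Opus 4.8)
The event in question is the union of an \emph{overshoot} event $\{\log M \geq (\haf+\delta)r^2\}$ and an \emph{undershoot} event $\{\log M \leq (\haf-\delta)r^2\}$, where I write $f(z)=\Sum_j \alpha_j e^{\theta_j i}z^j/\sqrt{j!}$ and $M=\max_{B(0,r)}|f|$; the plan is to bound the probability of each separately (here $n=m$ in the stated exponent). The guiding principle is that the reproducing kernel $e^{|z|^2}$ of the Bargmann--Fock space makes $e^{r^2/2}$ the natural size of $M$, and that the number of Taylor coefficients that are ``active'' on $B(0,r)$ is of order $r^{2m}$. The target exponent $r^{2m+2}$ then arises as the product of these two scales: roughly $r^{2m}$ essentially independent constraints, each carrying weight of order $r^2$ in the exponent.

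For the overshoot bound I would work on the good event $\Omega_r^+$ on which $|\alpha_j|\leq r^{m+1}$ for all $j$ with $|j|\leq Cr^2$ and $|\alpha_j|\leq 2^{|j|/2}$ for all $j$ with $|j|>Cr^2$, where $C>2e$ is fixed. By Proposition \ref{CalcII}(a-i) and independence, the complement of $\Omega_r^+$ has probability at most a constant times $r^{2m}e^{-r^{2m+2}/2}$ plus a doubly-exponentially small tail, hence at most $e^{-c_\delta r^{2m+2}}$. On $\Omega_r^+$, since $|e^{\theta_j i}|=1$ and every $z\in B(0,r)$ satisfies $\Sum_k|z_k|^2\leq r^2$, the weighted arithmetic–geometric mean inequality gives $|z^j|\leq r^{|j|}\big(\prod_k j_k^{j_k}\big)^{1/2}|j|^{-|j|/2}$, and the elementary bound $j!\geq\prod_k (j_k/e)^{j_k}$ then yields $|z^j|/\sqrt{j!}\leq (er^2/|j|)^{|j|/2}$. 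The map $l\mapsto(er^2/l)^{l/2}$ is maximized at $l=r^2$ with value $e^{r^2/2}$, so the bulk sum over $|j|\leq Cr^2$ is at most a polynomial in $r$ times $e^{r^2/2}$, while the choice $C>2e$ makes the tail sum (carrying the weights $2^{|j|/2}$) geometrically small. Thus $M\leq e^{(\haf+\delta)r^2}$ on $\Omega_r^+$ for $r$ large.

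For the undershoot bound I would argue contrapositively through Cauchy's estimates. The polydisc $P(0,r/\sqrt m)$ is contained in $B(0,r)$, so if $M<e^{(\haf-\delta)r^2}$ then every coefficient obeys $|\alpha_j|\leq M\sqrt{j!}\,m^{|j|/2}r^{-|j|}=:\eta_j$. I would then center attention on the balanced index $j^\ast$ with each $j^\ast_k=\lfloor r^2/m\rfloor$: a Stirling computation gives $\log\eta_{j^\ast}=-\delta r^2+O(\log r)$, and a Taylor expansion of $\log\eta_j$ about $j^\ast$ (whose Hessian is diagonal with entries $\tfrac{m}{2r^2}$) gives $\log\eta_j\approx -\delta r^2+\tfrac{m}{4r^2}\Sum_k(j_k-r^2/m)^2$. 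Hence the indices in $S=\{\,j:\ \Sum_k(j_k-r^2/m)^2\leq \tfrac{2\delta}{m}r^4\,\}$ all satisfy $\eta_j\leq e^{-(\delta/2)r^2}$, and a lattice-point count in this $m$-dimensional ellipsoid gives $|S|\geq c_{\delta,m}r^{2m}$. Since the $\alpha_j$ are independent, Proposition \ref{CalcII}(a-ii) yields
$$\mathbf{Prob}\big(M<e^{(\haf-\delta)r^2}\big)\ \leq\ \prod_{j\in S}\mathbf{Prob}\big(|\alpha_j|\leq e^{-(\delta/2)r^2}\big)\ \leq\ e^{-(\delta/2)r^2\,|S|}\ \leq\ e^{-c_\delta r^{2m+2}}.$$

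The hard part is this undershoot estimate, and within it the quantitative count of $S$. The single constraint at $j^\ast$ only produces probability $e^{-\delta r^2}$, which is nowhere near the target; the essential point is that there is a full $m$-dimensional ellipsoid of order $r^{2m}$ indices, each independently forcing $|\alpha_j|$ to be exponentially (in $r^2$) small, and it is the product of these anti-concentration estimates that supplies the extra factor $r^2$ in the exponent. Making the Stirling asymptotics for $\log\eta_j$ uniform across the whole ellipsoid $S$, and verifying that the quadratic approximation remains valid there (not merely infinitesimally near $j^\ast$), is the delicate step; the remaining tail and counting estimates are routine and are already packaged in Proposition \ref{CalcII}.
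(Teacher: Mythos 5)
Your proposal is correct and follows essentially the same route as the paper: the overshoot is handled by conditioning on a good event that caps the coefficients (your cap $r^{m+1}$ versus the paper's $e^{\delta r^2/4}$, both giving an $e^{-cr^{2m+2}}$ complement) together with the elementary bound $\max_{B(0,r)}|z^j|/\sqrt{j!}\leq (er^2/|j|)^{|j|/2}$, and the undershoot by Cauchy estimates on a polydisc inside $B(0,r)$ forcing $|\alpha_j|\leq e^{-c\delta r^2}$ for a family of $\sim r^{2m}$ indices near the balanced index $j_k\approx r^2/m$, followed by independence and the Gaussian small-ball bound of Proposition \ref{CalcII}. The only cosmetic difference is that you isolate an ellipsoid of indices via the Hessian of $\log\eta_j$ where the paper (following \cite{Zrebiec07MMJ}) uses a box $j_k\in[(1-\sqrt{\delta/m})r^2/m,(1+\sqrt{\delta/m})r^2/m]$; both contain $c_{\delta,m}r^{2m}$ lattice points and yield the same exponent.
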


Only minor modifications for the argument presented in
\cite{Zrebiec07MMJ} are needed to prove this result in this form.
These modifications are needed as previous versions of this lemma
have only involved complex Gaussian random variables and these
modifications are minor as the steps used in the proof are only
dependent on the norm of the random variables. A sketch of the
proof is included here for completeness sake.

\begin{proof}
Let $\displaystyle{\psi_{\alpha,\theta}= \sum \alpha_j e^{\theta_j
i} \frac{z^j}{\sqrt{j!}} } $\par

Let $\displaystyle{M_{r,\alpha,\theta}= \max_{z\in  B(0,r)}
|\psi_{\alpha,\theta}(z)| } $\par Let $\Gamma_r=\{\alpha:
\frac{\log (M_{r,\alpha,\theta})}{r^2} \geq \frac{1}{2}+ \delta
\}$
\par
The proof that: $\mathbf{Prob}(\Gamma_r) \leq e ^ {- c_{\delta, 1}
r^{2m+2} }$ is extremely easy as $\max_{|j|<N} |\alpha_j|$ is
expected to grow polynomially and would need to grow exponentially
to be in this event. Rigorously,
\\
\begin{tabular}{ccl}
Let $\Omega_r$ be the event where: & $i) \ |\alpha_j|\leq
e^{\frac{\delta r^2}{4}},$ & $ |j| \leq
4 e \cdot m \cdot r^2  $\\
& $ii) |\alpha_{j}|\leq 2^{\frac{|j|}{2}}, $ & $ \ |j| > 4 e \cdot m \cdot r^2$\\
\end{tabular}
\par
$\mathbf{Prob}(\Omega_r^c)\leq e^{-e^{cr^{2}}}\leq
e^{-cr^{2m+2}}$, by Proposition \ref{CalcII}.\par


If $\alpha\in \Omega_r$, then for all $z\in B(0,r)$, then:\\
\begin{tabular}{rll}
$\displaystyle{M_{r,\alpha,\theta}}$&$ \leq$&$\displaystyle{ \max_{z\in B(0,r)}\left(\Sum_{|j| = 0}^{|j|\leq 4 e \cdot m (\frac{1}{2} r^2)} |\alpha_j| \frac{ |z|^{j}}{\sqrt{j!}}+ \Sum_{|j| > 4 e \cdot m (\frac{1}{2}r^2)} |\alpha_j| \frac{ |z|^{j} }{\sqrt{j!}}\right)}$\\
& $ \leq $ &$\displaystyle{e^{(\haf+\frac{\delta}{2})r^2}}$, by a series of standard estimates presented in \cite{Zrebiec07MMJ}.\end{tabular} \\

Thus $\Gamma_r \subset \Omega_r^c $, proving half the result.\par

Let $\displaystyle{M'_{r,\alpha,\theta}=\max_{P(0,r)}|\psi_\alpha|
}$ \par

Let $\displaystyle{ E_{\delta,r,\theta}= \{ \alpha: \log
(M'_{r,\alpha,\theta}< (\haf-\delta)r^2\}\}} $\par If $\alpha\in
 E_{\delta,r,\theta}$ then by Cauchy's Integral Formula: $\left|\frac{\partial^{ j }
\psi_\alpha}{\partial z^j}\right|(0) \leq j! M_{r,\alpha,\theta}'
r^{-|j|}$. Further, by direct computation: $\left|\frac{\partial^{
j } \psi_\alpha}{\partial z^j}\right|(0)= |\alpha_j| \sqrt{j!} $.
When these estimates are combined with elementary estimates and
Stirling's formula, it can be shown that $\exists \Delta>0$ such
that $\forall \delta\leq \Delta$ if $j\in\left[1-\sqrt
\frac{\delta}{n}, 1+\sqrt
\frac{\delta}{n} \right]$ then \\
 $|\alpha_j|\leq e^{-\frac{\delta r^2}{4}}$. For more details see \cite{Zrebiec07MMJ}.
Thus,\\
$\mathbf{Prob}(\{  \log M_{r,\alpha,\theta}' \leq
(\frac{1}{2}-\delta)r^2 \})$
\par$\leq \mathbf{Prob}\left( \left\{ |\alpha_{j}| \leq
e^{-\frac{1}{4}\delta r^2}:\ j_k \ \in \left[(1- \sqrt
\frac{\delta}{m}) r^2, \  (1+ \sqrt \frac{\delta}{m})r^2 \right]
\right\}\right)$\par$ \leq e^{-c_{m,\delta} r^{2m+2}}$, by
Proposition \ref{CalcII}. \par The result then follows by
subadditivity as
$$ M_{r,\alpha, \theta}\geq M'_{\frac{1}{\sqrt{m}} r,\alpha, \theta}\Rightarrow \left\{M_{r,\alpha, \theta}< (\frac{1}{2}-\delta)r^2 \right\}\subset E_{\delta,r\theta}$$

\end{proof}

As a consequence of this previous result and Proposition
\ref{ComplexRotation} (the complex invariance law), we have the
following result:

\begin{corr}\label{ValueEstimate}
For all $\delta>0$ there exists $R_\delta $ such that for all
$r>R_\delta$, if $z_0 \in \overline{B(0,r)}\backslash
B(0,\frac{1}{2}r)$ then
$$ \mathbf{Prob}\left( \left\{ \exists \zeta\in
B(z_0,\delta r) \ s.t. \ \log |\psi_\alpha(\zeta)| >
\left(\frac{1}{2} - \delta \right) |z_0 |^2 \right\}^c \right)\leq
e^{-cr^{2m+2}}$$
\end{corr}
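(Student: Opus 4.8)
The plan is to combine the complex invariance law (Proposition \ref{ComplexRotation}) with the growth estimate of Lemma \ref{Anchises}: the former fixes the natural scale $e^{\frac{1}{2}|z_0|^2}$, while the latter supplies the failure probability $e^{-cr^{2m+2}}$. Set $y=(|z_{0,1}|,\dots,|z_{0,m}|)\in\R^m$, so that $|y|=|z_0|$. Proposition \ref{ComplexRotation} (equivalently Lemma \ref{Realtranslation} with the bookkeeping in its proof) furnishes a fresh i.i.d.\ standard Gaussian sequence $\{\beta_j\}$ with
$$\psi_\alpha(\zeta)=e^{-\frac{1}{2}|z_0|^2+y\cdot\zeta}\,\Phi_\beta\bigl(\zeta\,e^{i\arg z_0}-z_0\bigr),\qquad \Phi_\beta(w)=\sum_j\beta_j\,e^{-i\,j\cdot\arg z_0}\,\frac{w^j}{\sqrt{j!}}.$$
This identity is checked first for $\zeta\in\R^m$ and then extended to all $\zeta\in\C^m$ by analytic continuation, both sides being entire. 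Here $\Phi_\beta$ is exactly of the rotated form to which Lemma \ref{Anchises} applies, and since $E|\psi_\alpha(\zeta)|^2=e^{|\zeta|^2}$, the quantity $e^{\frac{1}{2}|z_0|^2}$ is the typical size of $|\psi_\alpha|$ throughout $B(z_0,\delta r)$; the corollary asserts that the maximum over the ball dips below $e^{-\delta|z_0|^2}$ times this scale only with probability $\le e^{-cr^{2m+2}}$.

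Carrying Lemma \ref{Anchises} through directly, applied to $\Phi_\beta$ on $B(0,\delta r)$, produces off an event of probability $\le e^{-c(\delta r)^{2m+2}}=e^{-cr^{2m+2}}$ a point $w^\star$ with $|\Phi_\beta(w^\star)|\ge e^{(\frac{1}{2}-\delta)(\delta r)^2}\ge 1$; pulling back and bounding the weight from below (the term $y\cdot\zeta$ varies by at most $|y|\,\delta r\le\delta r^2$, and $|z_0|\ge \frac{r}{2}$ absorbs $\delta r^2$ into $4\delta|z_0|^2$) yields $\log|\psi_\alpha|\ge(\frac{1}{2}-C\delta)|z_0|^2$ at the pulled-back point. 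I expect the \emph{main obstacle} to appear precisely here: the map $w\mapsto(w+z_0)e^{-i\arg z_0}$ carries $B(0,\delta r)$ onto $B(y,\delta r)$, so the witness sits near the real representative $y$, and $|y-z_0|$ can be of order $r$. Because real random Bargmann--Fock functions have only real, not complex, translation invariance, there is no isometry of the model taking the complex center $z_0$ to the origin, and Lemma \ref{Anchises}, which controls only origin-centered balls, cannot be invoked verbatim around $z_0$. (This is exactly the step that is immediate in the complex case of \cite{SodinTsirelson05,Zrebiec07MMJ}.)

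To secure a witness genuinely inside $B(z_0,\delta r)$ I would argue directly. Fix a maximal $1$-separated net $\{v_i\}_{i=1}^N\subset B(z_0,\delta r)$, so $N\asymp(\delta r)^{2m}$; the complementary event is contained in $\bigcap_i\{\,|\psi_\alpha(v_i)|\le e^{(\frac{1}{2}-\delta)|z_0|^2}\,\}$. Each $\psi_\alpha(v_i)$ is Gaussian with $E|\psi_\alpha(v_i)|^2=e^{|v_i|^2}\asymp e^{|z_0|^2}$, so Gaussian anti-concentration (Proposition \ref{CalcII}(a-ii), applied to whichever of $\Re\psi_\alpha(v_i),\,\Im\psi_\alpha(v_i)$ carries at least half the variance) gives $\mathbf{Prob}(|\psi_\alpha(v_i)|\le e^{(\frac{1}{2}-\delta)|z_0|^2})\le Ce^{-\delta|z_0|^2}$. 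The normalized correlations obey $|E[\psi_\alpha(v_i)\overline{\psi_\alpha(v_{i'})}]|\big/\sqrt{E|\psi_\alpha(v_i)|^2\,E|\psi_\alpha(v_{i'})|^2}=e^{-\frac{1}{2}|v_i-v_{i'}|^2}$, which decays rapidly along the net, so the Gaussian comparison inequality of Theorem \ref{NormalComparisonIneq} should convert the intersection into an essentially multiplicative bound $\le(Ce^{-\delta|z_0|^2})^N=e^{-c'r^{2m+2}}$, the product of $\asymp r^{2m}$ factors each costing $e^{-cr^2}$. The crux that remains is to make this final step rigorous for the complex-valued (hence two-real-dimensional) Gaussian data $\psi_\alpha(v_i)$ with two-sided modulus events, and to confirm the off-diagonal correlations are small enough for the comparison inequality to deliver the product bound with the correct exponent $r^{2m+2}$.
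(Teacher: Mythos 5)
Your diagnosis of the central difficulty is exactly right, and you should know that the paper's own proof does not actually escape it. The paper applies Lemma \ref{Anchises} to the rotated series $\psi_{\alpha,\theta}$ on $B(0,\delta r)$ and then asserts a transfer identity of the form $e^{-|z|^2/2}|\psi_{\alpha,\theta}(z)|=e^{-|z-y|^2/2}|\psi_{\beta}(z-w)|$ with $\{\beta_j\}$ i.i.d.\ standard real Gaussians, which would place the witness in a ball about the complex point $w=z_0$. But no such identity can hold, even in distribution, unless $w=\overline{w}$: at $z=0$ the left side is the real Gaussian $\alpha_0$, with $E[\alpha_0^2]=E[|\alpha_0|^2]=1$, while the right side $e^{-y^2/2}\psi_\beta(-w)$ has $E[|X|^2]=1$ but $E[X^2]=e^{-y^2+w\cdot w}$, of modulus strictly less than $1$ for non-real $w$. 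This is precisely the absence of complex translation invariance you point out; Proposition \ref{ComplexRotation} only relocates the origin to the real representative $y$, so both your first route and the paper's land the witness in $B(y,\delta r)$ rather than $B(z_0,\delta r)$. You have correctly identified the step where the published argument is at best incomplete.

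That said, your replacement argument is not yet a proof, and the remaining gap is not cosmetic. Theorem \ref{NormalComparisonIneq} bounds $\mathbf{Prob}\left(\bigcap_i\{X_i\le 0\}\right)$, where each factor costs only $\tfrac{1}{2}$; it gives no handle on $\bigcap_i\{|\psi_\alpha(v_i)|\le\lambda\}$, an intersection of small-ball events for correlated complex Gaussians, and nothing in the paper converts near-independence of the net into the product bound $(Ce^{-\delta|z_0|^2})^N$ that your exponent $r^{2m+2}$ requires. Two further problems: (i) your per-point anti-concentration bound fails on the inner side of $B(z_0,\delta r)$, where $|v_i|^2$ can be as small as $|z_0|^2-4\delta|z_0|^2$ and the threshold $e^{(\frac{1}{2}-\delta)|z_0|^2}$ then exceeds the standard deviation $e^{\frac{1}{2}|v_i|^2}$, so the net must be restricted to points with $|v_i|$ strictly larger than $|z_0|$; (ii) the joint law of the $2N$ real variables $(\mathrm{Re}\,\psi_\alpha(v_i),\mathrm{Im}\,\psi_\alpha(v_i))$ is determined by both $E[\psi(v)\overline{\psi(v')}]=e^{v\cdot\overline{v'}}$ and the pseudo-covariance $E[\psi(v)\psi(v')]=e^{v\cdot v'}$, whose normalized size is $e^{-\frac{1}{2}|\overline{v}-v'|^2}$ and is not small when $z_0$ lies near $\R^m$, so "rapid decay along the net" is not automatic. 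A route that genuinely closes the gap is to rerun the lower-bound half of Lemma \ref{Anchises} at $z_0$ itself: Cauchy's estimates on a polydisc about $z_0$ show that the complementary event forces roughly $(\delta r)^{2m}$ Taylor coefficients $\partial^{j}\psi_\alpha(z_0)/j!$ to be exponentially small, and what must then be supplied is a lower bound on the conditional variance of each such (correlated) Gaussian coefficient given the others. Until that, or a small-ball decoupling inequality, is proved, the statement remains open in your write-up.
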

This result has been known for complex random Bargmann-Fock
functions, and the key difference between this proof and its
predecessors is that real random Bargmann-Fock functions are not
``complex translation" invariant, and as such we have to allow for
multiplication of our random variables by $e^{i\theta}, \
\theta\in \R^m$.
\begin{proof}
It suffices to prove the result for any small delta. Let $\delta<
\frac{1}{4}$. Let $ye^{i\theta}=w$, where $y\in\R^m $. Let
$\psi_{\alpha,\theta}(x)=\Sum_{|j|=0}^\infty \alpha_j e^{i \theta
j} \frac{x^j}{\sqrt{j!}}$
\par We restrict ourselves to the following event:
$$ \max_{z \in
\partial B(0,\delta r)}e^{\frac{-\delta^2 r^2}{2}}|\psi_{\alpha,\theta}(z)| \geq -
\delta^3 r^2,$$ whose complement, by Lemma \ref{Anchises}, occurs
with an appropriately small probability.\par Translating this
result using Lemma \ref{ComplexRotation} gives the following:
$$ \max_{z \in B(0,\delta r)}e^{\frac{-(\delta r)^2}{2}}|\psi_{\alpha,\theta}(z)| = \max_{z \in B(0,\delta r)}e^{\frac{-|z|^2}{2}}|\psi_{\alpha,\theta}(z)|
= \max_{z \in B(0,\delta r)}e^{\frac{-|z-y|^2}{2}}|\psi_{\beta}(z-w)|$$\\
\begin{tabular}{rcl}
$\displaystyle{\max_{z \in B(0,\delta
r)}\log(|\psi_{\beta}(z-w)|)}$& $-$& ${\haf|z-y|^2}\
=\displaystyle{\max_{z \in B(w,\delta
r)}\left(\log(|\psi_{\beta}(z)|)-\haf|x|^2\right)}$\\
& $\leq$&$\displaystyle{ \displaystyle{\max_{z \in B(w,\delta
r)}(\log(|\psi_{\beta}(z)|))-\haf|y-\delta r|^2} }$\\
& $\leq$&$\displaystyle{\displaystyle{\max_{z \in B(w,\delta
r)}(\log(|\psi_{\beta}(z)|))-\haf|y|^2+\delta r|y|-\haf \delta^2r^2}} $\\
\end{tabular}\\

Thus,
$$\max_{z\in B(w,\delta r)}\log|\psi_{\beta}(z)|\geq \haf|y|^2+\haf \delta^2 r^2-\delta r|y|-\delta^3 r^2\geq \haf |y|^2- \frac{\delta}{4} |y|^2 $$
\end{proof}

This result allows us to prove the following lemma concerning the
average of $\log |\psi_\alpha |$ with respect to the rotationally
invariant Haar probability measure on the sphere of radius $r$, $d
\sigma_r$:

\begin{lemma}\label{Athena} For all $\delta > 0,$ there exists $c_m>0$ such that for all $r>R_m$
$$ \mathbf{Prob}\left(\left\{ \frac{1}{r^2}
\displaystyle\int_{z\in\partial B(0,r) }  \log |\psi_\alpha | d
\sigma_r(z)  \leq \frac{1}{2} - \Delta \right\} \right) \leq
e^{-cr^{2m+2}}$$
\end{lemma}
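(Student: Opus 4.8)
The plan is to show that, off an event of probability at most $e^{-cr^{2m+2}}$, the spherical average $A(r):=\int_{\partial B(0,r)}\log|\psi_\alpha|\,d\sigma_r$ satisfies $A(r)\ge(\haf-\Delta)r^2$; since $d\sigma_r$ is a probability measure this average is exactly the quantity in the statement. Write $u=\log|\psi_\alpha|$, which is almost surely a genuine plurisubharmonic function on $\C^m=\R^{2m}$ (as $\psi_\alpha\not\equiv0$ a.s.), bounded above on compacta and locally integrable, so all of its spherical and solid averages are finite. The obstruction to a cheap lower bound is that $u$ has logarithmic poles along the zero set, so pointwise lower bounds do not survive averaging. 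The device that circumvents this is to pair Corollary \ref{ValueEstimate} (which produces points where $u$ is close to $\haf r^2$) with the global upper bound coming from Lemma \ref{Anchises}: once $u\le M$ with $M=(\haf+O(\delta))r^2$, the function $v:=M-u\ge0$ is superharmonic, and the solid average of a nonnegative superharmonic function is controlled by its small values, which I then propagate from a thin shell to the sphere by monotonicity of spherical means.

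I would first fix the good event. Applying the upper tail of Lemma \ref{Anchises} on the ball $B(0,(1+2\delta)r)$, with probability at least $1-e^{-cr^{2m+2}}$ we have $u\le M:=(\haf+\delta)(1+2\delta)^2r^2$ on $\overline{B(0,(1+2\delta)r)}$. Next choose a $2\delta r$-net $\{z_i\}_{i=1}^N$ of the shell $\Sigma:=\overline{B(0,r)}\setminus B(0,(1-2\delta)r)$; since $\mathrm{vol}(\Sigma)\asymp\delta r^{2m}$ and the net balls have volume $\asymp(\delta r)^{2m}$, the number $N=N(m,\delta)$ is independent of $r$. For $\delta<\tfrac14$ each $z_i$ lies in $\overline{B(0,r)}\setminus B(0,\haf r)$, so Corollary \ref{ValueEstimate} applies at each $z_i$; a union bound over the $N$ points shows that, off an event of probability $\le Ne^{-cr^{2m+2}}$, every ball $B(z_i,\delta r)$ contains a point $\zeta_i$ with $u(\zeta_i)>(\haf-\delta)|z_i|^2\ge(\haf-3\delta)r^2$. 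Intersecting with the upper-bound event, the bad event has probability $\le(N+1)e^{-cr^{2m+2}}\le e^{-c'r^{2m+2}}$ for large $r$, which is the required order.

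On the good event the estimate is purely deterministic. Since $v=M-u\ge0$ is superharmonic on $B(0,(1+2\delta)r)$ and $\zeta_i\in B(z_i,2\delta r)$, the weak Harnack inequality for nonnegative superharmonic functions gives, for a dimensional constant $C_m$,
\[ \frac{1}{\mathrm{vol}\,B(z_i,2\delta r)}\int_{B(z_i,2\delta r)}v\,dV\le C_m\inf_{B(z_i,2\delta r)}v\le C_m\,v(\zeta_i)\le C_m\bigl(M-(\haf-3\delta)r^2\bigr)=C_m\,O(\delta)r^2. \]
Because $v\ge0$ throughout $B(0,(1+2\delta)r)\supset\Sigma$ and the balls $B(z_i,2\delta r)$ cover $\Sigma$, summing these bounds (the overlaps only help, as the integrand is nonnegative) yields $\int_\Sigma v\,dV\le\sum_i\int_{B(z_i,2\delta r)}v\,dV\le C_m'\,\mathrm{vol}(\Sigma)\,O(\delta)r^2$, hence $\mathrm{vol}(\Sigma)^{-1}\int_\Sigma u\,dV\ge M-C_m'O(\delta)r^2=(\haf-O(\delta))r^2$. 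Finally, $u$ is subharmonic, so $A(\rho)$ is nondecreasing in $\rho$; since $\Sigma\subset B(0,r)$, the shell average is a weighted average of $A(\rho)$ over $\rho\le r$ and is therefore $\le A(r)$. Combining, $A(r)\ge(\haf-O(\delta))r^2$, and choosing $\delta$ small enough that the $O(\delta)$ term is below $\Delta$ completes the proof.

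The main obstacle is precisely this pointwise-to-average transfer: the lower bounds from Corollary \ref{ValueEstimate} are only one-point statements and are destroyed by the poles of $\log|\psi_\alpha|$, so everything hinges on first installing the global ceiling $u\le M$ from Lemma \ref{Anchises} to make $M-u$ a nonnegative superharmonic function, and then on the weak Harnack inequality to convert ``$u$ is large somewhere in $B(z_i,2\delta r)$'' into ``$u$ is large on average over $B(z_i,2\delta r)$''. The bookkeeping that keeps $M$, the net value $(\haf-3\delta)r^2$, and the Harnack loss all within $O(\delta)$ of $\haf r^2$, so that a single choice of $\delta$ beats the prescribed $\Delta$, is the only other point requiring care; the probabilistic step and the monotonicity of spherical means are routine.
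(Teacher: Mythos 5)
Your proof is correct, but it reaches the average by a genuinely different route than the paper. The paper transfers the one-point lower bounds of Corollary \ref{ValueEstimate} to the spherical mean via the Poisson integral representation: it partitions the sphere $S_{\kappa r}$ into $N$ pieces $I_j$ of measure $\sigma_j$, uses the sub-mean-value inequality $\log|\psi_\alpha(\zeta_j)|\le\int P_r(\zeta_j,z)\log|\psi_\alpha(z)|\,d\sigma_r(z)$, and then must control the error term $\max_z\bigl|\sum_j\sigma_j(P_r(\zeta_j,z)-1)\bigr|\le C\delta^{1/(2(2n-1))}$ together with a separate a priori $L^1(\sigma_r)$ bound on $\log|\psi_\alpha|$ (both imported from \cite{Zrebiec07MMJ}). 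You instead install the global ceiling $u\le M=(\haf+O(\delta))r^2$ from Lemma \ref{Anchises}, apply the weak Harnack inequality to the nonnegative superharmonic function $M-u$ on a bounded-in-$r$ net of balls covering a thin shell, and finish with monotonicity of spherical means of subharmonic functions. Your version buys a cleaner deterministic step: no Poisson-kernel asymptotics and no separate $L^1$ estimate (the ceiling plays that role), at the price of invoking the $L^1$ weak Harnack inequality (valid here since $1<n/(n-2)$ for $n=2m$, and for all $p$ when $m=1$). Two bookkeeping points to tighten: the standard weak Harnack bounds the average over $B(z_i,2\delta r)$ by $C_m\inf_{B(z_i,\delta r)}v$ (infimum over the half-radius ball, which still dominates $v(\zeta_i)$ since $\zeta_i\in B(z_i,\delta r)$) and requires superharmonicity and nonnegativity on $B(z_i,4\delta r)$, so the ceiling from Lemma \ref{Anchises} should be taken on $B(0,(1+4\delta)r)$ rather than $(1+2\delta)r$. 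Both proofs rest on the same probabilistic inputs and yield the same order $e^{-cr^{2m+2}}$.
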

Using Corollary \ref{ValueEstimate}, Proposition
\ref{ComplexRotation} and the same steps as those in
\cite{Zrebiec07MMJ} this result may be proven.

\begin{proof} (Sketch).\par
The proof uses regularity properties of subharmonic functions, and
thus we begin by fixing a constant $\kappa_\delta$ which is near
but less than $1$. This is done to avoid singularities of the
Poisson kernel, and still be able to accurately estimate the
average of $\log(|\psi_\alpha|) $. We then choose a disjoint
partition, $\{I_{j,\delta}^{\kappa r} \} $, of $ S_{\kappa r}=
\partial B(0,r)$ by projecting even $2m-1$ cubes so that
$\mathrm{diam}(I^{\kappa r}_j)\leq c_{\delta,m} r$. \par Let
$\sigma_j= \sigma_{\kappa r}(I^{\kappa r}_j)$, which does not
depend on r, and for all $j$ fix a point $x_j \in I_j^{\kappa r}$.

\par By Corollary \ref{ValueEstimate}, for each j there exists a $\zeta_j \in B(x_j,\delta r)$ such that
$$\log (|\psi_\alpha(\zeta_j)|) > \left(\frac{1}{2} - 3 \delta \right)|x_j|^2=
\left(\frac{1}{2} - 3 \delta\right)\kappa^2 r^2 $$ except for N
different events each of which has probability less than
$e^{-c'r^{2m+2}}$, and thus the union of all of these also has
probability less than $e^{-c r^{2m+2}}$.\par As we have the same
estimate for each j for $|\psi_\alpha(\zeta_j)|$, and $\sum
\sigma_j= 1$ we have:
\\
\begin{tabular}{cl}

\hspace{.1in} $\displaystyle{\left(\frac{1}{2}-3 \delta\right)}$ &
$\displaystyle{\kappa^2 r^2 \leq
\Sum_{j=1}^N \sigma_j \log(|\psi_\alpha (\zeta_j)|)}$\\
& $\displaystyle{\leq \int_{\partial B(0,r)}  \left( \Sum_j
\sigma_j P_r(\zeta_j, z) \log(|\psi_\alpha (z)|) d \sigma_r
(z)\right) }$
\\
& $\displaystyle{= \int_{\partial (B(0,r))} \left(\Sum_j \sigma_j
(P_r(\zeta_j, z) -1) \right) \log(|\psi_\alpha(z)|) d \sigma_r (z)}$\\
&$\displaystyle{ \ \ \ \  + \int_{\partial (B(0,r))}
\log(|\psi_\alpha (z)|) d \sigma_r (z)}$
\\
\end{tabular}\\
Hence,
\\\begin{tabular}{rl} $\int_{\partial B(0,r)} $& $ \log(|\psi_\alpha|) d \sigma_r$\\&  $\geq
(\frac{1}{2}-3 \delta) \kappa^2 r^2 -\int |\log|\psi_\alpha|| d
\sigma_r \cdot \max_z |\sum_j \sigma_j (P_r(\zeta_j, z) -1) | $
\\ 
\end{tabular}\\
The result may be completed as it is shown in \cite{Zrebiec07MMJ}
that:
 $$\displaystyle{ \max_{z\in
\partial (B(0,r))} \left|  \sum_j \sigma_j (P_r(\zeta_j,z)
-1 )\right| \leq C_n \delta^{\frac{1}{2 (2n-1)}}},$$ and that
Lemma \ref{Anchises} $\Rightarrow \ \exists c_m, \ R_m,$ such that
for all $r>R_m,$ $$  \newline \mathbf{Prob}\left(\{\|\psi_\alpha
\|_{L^1(S_r,\sigma)}< C_m r^2\}\right)<e^{-c_m r^{2m+2}}.$$
\end{proof}

By combining lemmas \ref{Anchises}, \ref{Athena}, and an
$m$-dimensional analog of Jensen's formula we get
an upperbound for the probability of a large class of events: \\
{\bf Theorem \ref{Hole probability}} {\it(Probability estimates
for over and under crowded zero
sets)}\\
{\it If}
$$\psi_\alpha (z_1, z_2 \ldots, z_m)= \Sum_j \alpha_j
\frac{z_1^{j_1} z_2^{j_2} \ldots z_m^{j_m}}{\sqrt{j_1! \cdot
j_m!}},$$ {\it where} $\alpha_j$ {\it are independent identically
distributed real Gaussian random variables,
\newline then for all} $\delta
> 0,$ {\it there exists} $c_1, \ c_{2,\delta}>0 \ and \ R_{m,\delta}$ such that for all $r> R_{m,\delta}$
$$1) \ e^{-c_1r^{2m+2}}<\mathbf{Prob}\left(\left\{\left|n_{\psi_\alpha}(r) -\frac{1}{2}r^2
\right| \geq \delta r^2 \right\} \right) \leq e^{-c_{2,\delta}
r^{2m+2}}$$ {\it where} $n_{\psi_\alpha}(r)$ {\it is the
unintegrated counting function for} $\psi_\alpha$, {\it and}
$$2)\  e^{-c_1 r^{2m+2}} \leq \mathbf{Prob}\left(\mathrm{Hole_{r,\C^m}} \right) \leq
e^{-c_{2, \frac{1}{4}} r^{2m+2}}$$

\begin{proof} To obtain the upper probability estimate we need only apply
Lemmas \ref{Anchises} and \ref{Athena} to the $m$-dimensional
analog of Jensen's inequality. For further details on this
argument consult the one present in  \cite{Zrebiec07MMJ} which may
be adapted word for word.\par

To finish proving the result it suffices to show that the event
where there is a hole in the ball of radius $r$ contains an event
whose probability is larger than $e^{-cr^{2n+2}}$. \par Let
$\Omega_r$ be the event where:\par
\begin{tabular}{ll}$i)$&$ \ |\alpha_{0}| \geq E_m + 1$,\\ $ii)$&$
\ |\alpha_{j}|\leq e^{-(1+ \frac{m}{2})r^2}, \ \forall j: 1\leq
|j|\leq \lceil 24 m r^2 \rceil= \lceil(n \cdot 2
\cdot 12)r^2 \rceil $\\
$iii)$ & $ \ |\alpha_{j}|\leq 2^{\frac{|j|}{2}},  \ |j| > \lceil
24 m r^2 \rceil \geq 24 m r^2 $\end{tabular}\par $\mathbf{Prob}
(\Omega_r)\geq C (e^{-c_m r^{2m+2}})\geq e^{-cr^{2m+2}}$, by
independence and Proposition \ref{CalcII}.\par If $\alpha \in \Omega_r$ then\\
$|\psi_\alpha(z)| \geq |\alpha_{0}| - \Sum_{|j|=1}^{|j| \leq
\lceil 24 m r^2 \rceil} |\alpha_{j}| \frac{r^{|j|}}{\sqrt{j!
 }} - \Sum_{|j| > \lceil 24 m r^2 \rceil} |\alpha_{j}| \frac{r^{|j|}}{\sqrt{j!}}= |\alpha_{0}|- \sum^1 - \sum^2$\\

\begin{tabular}{cl}
 $\Sum^1$ & $\leq e^{-(1+ \frac{m}{2})r^2} \Sum_{|j| = 1}^{|j| \leq \lceil 24 m r^2 \rceil} \frac{r^{|j|}}{\sqrt{j!}}
 $ \\
 \vspace{.1in}
 &$\leq e^{-(1+ \frac{m}{2})r^2} \sqrt{(24 n r^2 +1)^{m}} \sqrt{(e^{r^m})}$,
 by Cauchy-Schwarz inequality.
\\
 \vspace{.1in}
 &$\leq C_m r^{m} e^{-r^2}\leq c e^{-0.9 r^2}<\frac{1}{2} $ for $r> R_m$\\
\end{tabular}\\

\begin{tabular}{cl}
 $\Sum^2$ & $\leq \Sum_{|j| > 24 m r^2} 2^{\frac{|j|}{2}} \left(\frac{|j|}{24 m}\right)^{\frac{|j|}{2}} \frac{1}{\sqrt{j!
}}$, as $r<\sqrt{\frac{|j|}{24 m}} $\\
  & $\leq c \Sum_{|j| > 24 m r^2} 2^{\frac{|j|}{2}} \left(\frac{|j|}{24 m}\right)^{\frac{|j|}{2}} \prod_{k=1}^{k=m} \left(\frac{e}{j_k}\right)^{\frac{j_k}{2}}$, by Stirling's formula \\
  & $=c\Sum_{|j| > 24 m r^2} \frac{(|j|)^{\frac{|j|}{2}}}{\left(\prod_{k=1}^{k=m} j_k^\frac{j_k}{2}\right) m^{\frac{|j|}{2}}} \left(\frac{e}{12}\right)^{\frac{|j|}{2}}
  $\\
& $\leq c \Sum_{|j|>1} \left(\frac{1}{4}\right)^{{\frac{|j|}{2}}}$, by Proposition \ref{CalcII}.\\
& $ \leq E_m $
\end{tabular}\\
Hence, $|\psi_\alpha(z)| \geq E_m + 1- \Sum^1-\Sum^2\geq \frac
{1}{2} $
\end{proof}


\end{document}